\DeclareMathOperator{\dist}{dist}
\DeclareMathOperator{\diam}{diam}
\DeclareMathOperator{\esssup}{ess\, sup}
\begin{document}
\newtheorem{oberklasse}{OberKlasse}
\newtheorem{lemma}[oberklasse]{Lemma}
\newtheorem{proposition}[oberklasse]{Proposition}
\newtheorem{definition}[oberklasse]{Definition}
\newtheorem{remark}[oberklasse]{Remark}

\newcommand{\R}{\ensuremath{\mathbbm{R}}}
\newcommand{\N}{\ensuremath{\mathbbm{N}}}
\newcommand{\Z}{\ensuremath{\mathbbm{Z}}}
\newcommand{\mc}{\mathcal}
\newcommand{\eps}{\ensuremath{\varepsilon}}

\title{Provably convergent implementations of the subdivision algorithm for the computation of invariant objects}
\author{Janosch Rieger\footnote{Monash University, School of Mathematical Sciences,
9 Rainforest Walk, Victoria 3800, Australia}}
\date{\today}
\maketitle

\begin{abstract}
The subdivision algorithm by Dellnitz and Hohmann for the computation of invariant 
sets of dynamical systems decomposes the relevant region of the state space into boxes
and analyzes the induced box dynamics.
Its convergence is proved in an idealized setting, assuming 
that the exact time evolution of these boxes can be computed.

In the present article, we show that slightly modified, directly implementable
versions of the original algorithm are convergent under very mild assumptions 
on the dynamical system.
In particular, we demonastrate that neither a fine net of sample points nor very
accurate approximations of the precise dynamics are necessary to guarantee
convergence of the overall scheme.

\end{abstract}

\noindent {\bf Keywords:} subdivision algorithm, computation of invariant sets,
guaranteed convergence, overapproximation\\
{\bf AMS classification numbers:} 37N30, 65L70

\section{Introduction}

The software package GAIO for the computation of invariant objects is based on the subdivision
algorithm, which was proposed in the research article \cite{DH97} in 1997.
The basic idea of this algorithm is to cover the relevant region of the state space with boxes, 
to analyze the dynamics induced on the box cover and to refine the cover successively in such 
a way that a neat overapproximation of the desired object is obtained with manageable computational effort.

Today, twenty years later, the subdivision algorithm and the GAIO package are firmly established.
Their applications include the computation of invariant manifolds of autonomous (see the original article \cite{DH97}) and nonautonomous dynamical systems (see \cite{PR09}), 
the solution of global optimization problems (see \cite{SD06}),
the approximation of Pareto sets in multiobjective optimization (see \cite{DSH05}), 
the computation of optimal stabilizing feedback laws (see \cite{GJ05}),
the approximation of almost invariant sets (see \cite{DJ99}),
the identification of coherent structures in 3d fluid flows (see \cite{FP09}),
the design of space missions (see \cite{DJPT06} and \cite{MR06}),
the computation of rigorous bounds in uncertainty quantification (see \cite{DKZ17}),
and the analysis of the formation of prices in quantitative finance (see \cite{CK16}).
The package is also used in the context of rigorous computations in dynamics (see \cite{Mi02}), 
and, in combination with Monte-Carlo methods, subdivision techniques have been established 
as a major tool in computational molecular dynamics (see \cite{DDJS98} and later work
of the authors).

\medskip

It was noted in \cite{Ju00} that convergence of the subdivision algorithm 
had been proved under idealized conditions.
It is, indeed, assumed in \cite{DH97}, that precise images of boxes in 
state space under the action of the dynamical system can be computed, 
which is not possible in a concrete implementation. 
Therefore, it was proposed in \cite{Ju00} in the context of discrete-time 
systems to work with numerical overapproximations of the precise images 
of boxes, which yields rigorous enclosures of the desired invariant objects.
The question, whether the enclosures converge to these objects, remained open.

It became common practice to evolve large sets of sample points in every box 
and to hope that the resulting discrete image induced the correct dynamics 
on the box cover. 
This approach is computationally expensive, in particular for continuous-time 
dynamics, where large ensembles of trajectories are integrated with high 
precision.
At the same time, the strategy is potentially dangerous, because it is 
well-known that invariant sets can react in an extremely sensitive way 
to discretization errors.

\medskip

The aim of the present article is to prove that enclosures of invariant objects,
which are generated by numerical overapproximations in the spirit of \cite{Ju00}, 
do converge.
For discrete as well as for continuous-time systems, we establish sufficient 
conditions for overapproximations to yield an overall convergent subdivision 
algorithm.
We keep these conditions fairly general, hoping that most variants of the 
subdivision algorithm, which are in current use, can be discussed in this framework.

The organization of and the logic behind the sections on discrete-time 
and continuous-time systems is quite similar.
At first, we prove some useful properties of the global relative attractor,
then we discuss the convergence of a global discretization scheme, 
and in the end, we prove that the output of the subdivision algorithm 
is sandwiched between the exact object and the output of the 
global discretization scheme.

\section{Setting and notation}

Let $\R_+$ denote the set of all nonnegative real numbers, 
and consider a vector norm $\|\cdot\|:\R^d\to\R_+$.
For any  $X,Y\subset\R^d$, the quantities
\[\diam(X):=\sup_{x\in X}\sup_{y\in X}\|x-y\|\quad\text{and}\quad
\dist(X,Y):=\sup_{x\in X}\inf_{y\in Y}\|x-y\|\]
are called the diameter of $X$ and the semidistance between $X$ and $Y$.

\medskip

For the spatial discretization of a dynamical system on a given 
compact set $Q\subset\R^d$, we introduce the notion of a cover.

\begin{definition}
Given $\rho>0$, a collection $\Omega=\{D_i\subset\R^d: i\in I\}$ is called
a $\rho$-cover of $Q$ if the index set $I$ is finite, if $Q=\cup_{i\in I}D_i$, 
if $D_i\neq\emptyset$ for all $i\in I$
and if $\diam(D_i)\le\rho$ for all $i\in I$.
\end{definition}

For any index set $I$, the symbol $2^I$ will represent the collection of all 
subsets of $I$, including the empty set.

The definition of nested covers formalizes the idea of a successively
refined sequence of discretizations.

\begin{definition}
Let $(\rho_n)_{n=0}^\infty$ be a sequence with $\rho_n\searrow 0$ as $n\to\infty$.
Then a sequence $(\Omega_n)_{n=0}^\infty$ of $\rho_n$-covers given by 
$\Omega_n=\{D_i^n\subset\R^d: i\in I_n\}$ 
is called nested if for every $n\in N$ and $i\in I_{n+1}$, there exists $j\in I_n$ 
such that $D_i^{n+1}\subset D_j^n$.
\end{definition}

The paper is essentially self-contained.
The only external resources that are used are Theorems 0.3.4 and 1.4.1 
from \cite{AC84}, which summarize well-known arguments from the proof 
of the Cauchy-Peano theorem we do not wish to repeat explicitly.

\section{Discrete-time dynamics}

Consider an autonomous dynamical system 
\begin{equation} \label{deq}
x_{k+1}=f(k_n),\quad k\in\N,
\end{equation}
induced by a homeomorphism $f:\R^d\to\R^d$.
We will be interested in the dynamics near a compact set $Q\subset\R^d$.
The object we wish to approximate is the global relative attractor. 

\begin{definition}
The global attractor of the dynamical system \eqref{deq} relative to $Q$ is the set
\begin{equation} \label{def:AQ:deq}
A_Q:=\cap_{k\in\N}f^k(Q).
\end{equation}
\end{definition}

Some characteristics of this attractor are immediate consequences of its definition.

\begin{lemma}\label{disc:compact}
The set $A_Q$ is compact and has the following properties.
\begin{itemize}
\item [(a)] We have $A_Q=\{x\in\R^d:f^{-k}(x)\in Q\ \forall\,k\in\N\}$.
\item [(b)] The inclusion  $f^{-k}(A_Q)\subset A_Q$ holds for all $k\in\N$.
\item [(c)] If $\tilde Q\subset Q$ is compact and $A_Q\subset\tilde Q$, 
then $A_Q=A_{\tilde Q}$.
\end{itemize}
\end{lemma}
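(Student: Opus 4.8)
The plan is to extract everything from the bijectivity of $f$, proving compactness first and then (a), (b), (c) in order, since each later part leans on the earlier ones. For compactness, I would observe that every iterate $f^k$ is continuous, so each $f^k(Q)$ is the continuous image of the compact set $Q$ and is therefore compact, hence closed in $\R^d$. The intersection $A_Q$ is then closed as an intersection of closed sets, and since it is contained in $f^0(Q)=Q$ it is bounded; thus $A_Q$ is compact. For part (a) the key observation is that, $f$ being a homeomorphism, each $f^k$ is a bijection, so $x\in f^k(Q)$ holds if and only if $f^{-k}(x)\in Q$. Intersecting this equivalence over $k\in\N$ rewrites the defining formula \eqref{def:AQ:deq} as the claimed set, so (a) is immediate.

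Part (b) should then follow from (a) together with the identity $f^{-j}\circ f^{-k}=f^{-(j+k)}$. Given $x\in A_Q$ and setting $y=f^{-k}(x)$, I would verify the membership criterion of (a) for $y$: for every $j\in\N$ we have $f^{-j}(y)=f^{-(j+k)}(x)\in Q$, because $j+k\in\N$ and $x\in A_Q$. Hence $y\in A_Q$, which is exactly the assertion $f^{-k}(A_Q)\subset A_Q$.

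For part (c) I would prove the two inclusions separately. The inclusion $A_{\tilde Q}\subset A_Q$ is pure monotonicity: $\tilde Q\subset Q$ gives $f^k(\tilde Q)\subset f^k(Q)$ for every $k$, so the defining intersections are nested. For the reverse inclusion I would combine the hypothesis $A_Q\subset\tilde Q$ with the backward invariance from (b): for $x\in A_Q$ and any $k\in\N$, part (b) yields $f^{-k}(x)\in f^{-k}(A_Q)\subset A_Q\subset\tilde Q$, so $f^{-k}(x)\in\tilde Q$ for all $k$. Since the characterisation (a) was established using only that $f$ is a bijection, it applies verbatim with $\tilde Q$ in place of $Q$, and this criterion now gives $x\in A_{\tilde Q}$.

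None of these steps constitutes a genuine obstacle; the whole lemma is a bookkeeping argument built on the bijectivity of $f$. The only points that deserve a moment of care are the consistent use of the equivalence $x\in f^k(Q)\Leftrightarrow f^{-k}(x)\in Q$ throughout, and the remark that the criterion (a) holds for \emph{any} set, not just $Q$, which is precisely what makes the reverse inclusion in (c) close.
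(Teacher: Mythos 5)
Your proposal is correct and follows essentially the same route as the paper: compactness via intersection of compact sets, (a) as a direct rewriting of the definition using bijectivity of $f^k$, (b) from (a) and the composition identity $f^{-j}\circ f^{-k}=f^{-(j+k)}$, and (c) from monotonicity in one direction and the combination of (b) with (a) applied to $\tilde Q$ in the other. The only difference is that you spell out details (closedness plus boundedness for compactness, the equivalence $x\in f^k(Q)\Leftrightarrow f^{-k}(x)\in Q$) that the paper leaves implicit.
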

\begin{proof}
Since $f$ is continuous, the set $A_Q$ is an intersection of compact sets 
and hence compact.
Statement (a) is a reformulation of the definition \eqref{def:AQ:deq}.

(b) Let $k\in\N$ and $x\in A_Q$ be given.
By part (a), whe have
\[f^{-l}(f^{-k}(x))=f^{-(k+l)}(x)\in Q\quad\forall l\in\N,\]
so again by part (a), we have $f^{-k}(x)\in A_Q$.

(c) Since $\tilde Q\subset Q$, we have
\[A_{\tilde Q}=\cap_{k\in\N}f^k(\tilde Q)\subset\cap_{k\in\N}f^k(Q)=A_Q.\]
Now let $x\in A_Q$. By part (b), we have
\[f^{-k}(x)\in A_Q\subset\tilde Q\quad\forall\,k\in\N,\]
so part (a) applied to $\tilde Q$ instead of $Q$ implies $x\in A_{\tilde Q}$.
\end{proof}

\subsection{A global discretization scheme}

We overapproximate the dynamics that the mapping $f^{-1}$ induces 
on a cover $\Omega$ of $Q$ by a multivalued mapping $\varphi$ 
acting on its index set $I$.
This abstraction provides a clear picture of the principles 
that are at work and leaves a lot of freedom for the design of concrete 
implementations.
Let us define the discrete analog of $A_Q$.

\begin{definition}
Let $\Omega=\{D_i\subset\R^d: i\in I\}$ be a $\rho$-cover of $Q$,
and let $\varphi:I\to 2^I$ be a mapping.
Then we define 
\[I_\Omega^\varphi:=\{i\in I: \varphi^k(i)\neq\emptyset\ \forall\,k\in\N\}\]
and call $A_\Omega^\varphi:=\cup_{i\in I_\Omega^\varphi}D_i$
the discrete global attractor of $(\Omega,\varphi)$.
\end{definition}

The index set $I_\Omega^\varphi$ and hence $A_\Omega^\varphi$
can be computed using Algorithm \ref{discrete:global:algorithm}, 
which can be considered a variant of Dijkstra's algorithm.
In the following, we show that the discrete attractors computed by this algorithm 
converge to $A_Q$ as $\rho$ tends to zero.
In a first step, we show that the exact global attractor $A_Q$ is always contained in
the numerical approximation.

\begin{proposition} \label{lower}
If $\Omega=\{D_i\subset\R^d: i\in I\}$ is a $\rho$-cover of $Q$,
and a mapping $\varphi:I\to 2^I$ satisfies
\begin{equation} \label{contained}
\big(f^{-1}(D_i)\cap Q\big)\subset\big(\cup_{j\in\varphi(i)}D_j\big)\quad\forall i\in I,
\end{equation}
then $A_Q\subset A_\Omega^\varphi$.
\end{proposition}

\begin{proof}
Fix $x\in A_Q$.
Since $\Omega$ is a cover of $Q$, there exists $i_0\in I$ such that $x\in D_{i_0}$.
Let us prove by induction that for all $k\in\N$,
\begin{equation} \label{IH}
\text{there exists}\ i_k\in\varphi^k(i_0)\ \text{with}\ f^{-k}(x)\in D_{i_k}.
\end{equation}
Since $\varphi^0(i_0)=\{i_0\}$ and $f^0(x)=x$, the statement is true for $k=0$.
Now assume that statement \eqref{IH} is true for some $k\in\N$.
By Lemma \ref{disc:compact} part (a), we have $f^{-(k+1)}(x)\in Q$, 
and since, in addition, $f^{-(k+1)}(x)\in f^{-1}(D_{i_k})$,
it follows from condition \eqref{contained} that 
\[f^{-(k+1)}(x)\in\big(\cup_{j\in\varphi(i_k)}D_j\big).\]
Hence there exists $i_{k+1}\in\varphi(i_k)\subset\varphi^{k+1}(i_0)$ 
with $f^{-(k+1)}(x)\in D_{i_{k+1}}$.
By induction, statement \eqref{IH} holds.
In particular, $\varphi^k(i_0)\neq\emptyset$ for all $k\in\N$,
and hence $i_0\in I^\varphi_\Omega$ and $x\in A^\varphi_\Omega$.
\end{proof}

Now we prove that the numerical approximation $A^\varphi_\Omega$ shrinks 
down to $A_Q$ if the cover of $Q$ and the discrete mapping $\varphi$ 
are refined appropriately.

\begin{proposition} \label{upper}
If $(\rho_n)_n\subset\R_+$ is a sequence with $\lim_{n\to\infty}\rho_n=0$,
if the collections $\Omega_n=\{D_i^n\subset\R^d: i\in I_n\}$ are $\rho_n$-covers of $Q$,
and if the mappings $\varphi_n:I_n\to 2^{I_n}$ satisfy
\begin{equation} \label{not:too:far}
\lim_{n\to\infty}\sup_{i\in I_n}\,\dist(\cup_{j\in\varphi_n(i)}D_j^n,f^{-1}(D_i^n))=0,
\end{equation}
then $\lim_{n\to\infty}\dist(A_{\Omega_n}^{\varphi_n},A_Q)=0$.
\end{proposition}

\begin{proof}
If the statement is false, then there exist $\eps>0$ and $i_0^n\in I^{\rho_n}_{\Omega_n}$,
$n\in\N$, such that, after passing to a subsequence, we have
\[\dist(D^n_{i_0^n},A_Q)\ge\eps\quad\forall\,n\in\N.\]
By definition of $I^{\rho_n}_{\Omega_n}$, for all $n\in\N$
there exist sequences $(i^n_k)_{k=1}^\infty\subset I_n$ with 
\[i^n_{k+1}\in\varphi_n(i^n_k)\quad\forall\,k\in\N.\]
Pick arbitrary points $x^n_k\in D^n_{i^n_k}$ for $k,n\in\N$
and split
\begin{align*}
\|x^n_{k+1}-f^{-1}(x^n_k)\|
\le&\dist(x^n_{k+1},\cup_{j\in\varphi_n(i^n_k)}D_j^n)\\
&+\dist(\cup_{j\in\varphi_n(i^n_k)}D_j^n,f^{-1}(D^n_{i^n_k}))
+\dist(f^{-1}(D^n_{i^n_k}),f^{-1}(x^n_k)).
\end{align*}
Because of $x^n_{k+1}\in\cup_{j\in\varphi_n(i^n_k)}D_j^n$, by \eqref{not:too:far},
since $f^{-1}$ is uniformly continuous on $Q$ and since 
$\dist(D^n_{i^n_k},x^n_k)\le\rho_n$, we conclude that
\begin{equation} \label{almost:trajectory}
\|x^n_{k+1}-f^{-1}(x^n_k)\|\to 0\quad\text{as}\quad n\to\infty.
\end{equation}
Now we construct a trajectory of \eqref{deq} using induction on $k$.
Since $Q$ is compact, we may pass to a subsequence to obtain
\[x_0:=\lim_{n\to\infty}x^n_{i^n_0}\in Q.\]
If points $(x_j)_{j=0}^k\subset Q$ with 
\begin{align}
&x_j=\lim_{n\to\infty}x^n_{i_j^n},\quad j=0,\ldots,k,\label{mylimit}\\
&x_{j+1}=f^{-1}(x_j),\quad j=0,\ldots,k-1,
\end{align}
have been constructed, we may pass to a subsequence again to obtain
\begin{equation} \label{newlimit}
x_{k+1}:=\lim_{n\to\infty}x^n_{i^n_{k+1}}\in Q.
\end{equation}
By \eqref{almost:trajectory}, \eqref{mylimit}, \eqref{newlimit} and continuity of $f^{-1}$,
it follows that
\begin{align*}
\|x_{k+1}-f^{-1}(x_k)\| 
\le& \|x_{k+1}-x^n_{i^n_{k+1}}\|
+ \|x^n_{i^n_{k+1}}-f^{-1}(x^n_{i^n_k})\|\\
&+ \|f^{-1}(x^n_{i^n_k})-f^{-1}(x_k)\|
\to 0\ \text{as}\ n\to\infty,
\end{align*}
so $x_{k+1}=f^{-1}(x_k)$.
By induction, we obtain a sequence $(x_k)_{k=0}^\infty\subset Q$ with
\[x_{k+1}=f^{-1}(x_k),\quad k\in\N.\]
Lemma \ref{disc:compact}(a) yields $x_0\in A_Q$.
This, however, is impossible, since
\begin{align*}
\dist(x_0,A_Q) 
= \dist(\lim_{n\to\infty}x^n_{i^n_0},A_Q)
= \lim_{n\to\infty}\dist(x^n_{i^n_0},A_Q)\ge\eps.
\end{align*}
\end{proof}

\subsection{A subdivision scheme}

Let us prove that the index sets $J_n$ generated by 
Algorithm \ref{discrete:subdivision:algorithm} encode subsets 
$\cup_{j\in J_n}D^n_j\subset Q$ converging to $A_Q$.

\begin{proposition} \label{sub:div:prop}
Let $(\rho_n)_{n=0}^\infty$ be a sequence with $\rho_n\searrow 0$ as $n\to\infty$,
and let $(\Omega_n)_{n=0}^\infty$ given by $\Omega_n=\{D_i^n\subset\R^d: i\in I_n\}$ 
be a nested sequence of $\rho_n$-covers of $Q$.
If the mappings $\varphi_n:I_n\to 2^{I_n}$ satisfy
\begin{equation} \label{n:contained}
\big(f^{-1}(D_i^n)\cap Q\big)\subset\big(\cup_{j\in\varphi_n(i)}D_j^n\big)
\quad\forall\,i\in I_n,\ n\in\N
\end{equation}
and condition \eqref{not:too:far}, then the index sets $J_n$ 
computed by Algorithm \ref{discrete:subdivision:algorithm} satisfy
\begin{equation} \label{sandwiched}
A_Q\subset\big(\cup_{j\in J_n}D^n_j\big)\subset A^{\varphi_n}_{\Omega_n}.
\end{equation}
In particular, the sets $\cup_{j\in J_n}D^n_j$ converge to $A_Q$ from above.
\end{proposition}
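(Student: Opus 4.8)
The plan is to establish the two inclusions in \eqref{sandwiched} separately and then to read off convergence from Proposition \ref{upper}. The right-hand inclusion $\cup_{j\in J_n}D_j^n\subset A_{\Omega_n}^{\varphi_n}$ is the more elementary of the two: it amounts to showing $J_n\subset I_{\Omega_n}^{\varphi_n}$. Algorithm \ref{discrete:subdivision:algorithm} retains an index $i$ at level $n$ only if it lies on an infinite $\varphi_n$-chain inside the current candidate collection; since such a chain is in particular an infinite $\varphi_n$-chain in all of $I_n$, we obtain $\varphi_n^k(i)\neq\emptyset$ for every $k\in\N$, hence $i\in I_{\Omega_n}^{\varphi_n}$ by definition.

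The left-hand inclusion $A_Q\subset\cup_{j\in J_n}D_j^n$ is proved by induction on $n$, and this is where I expect the real work to lie. The base case follows by applying the argument of Proposition \ref{lower} at the coarsest level. For the inductive step, assume $A_Q\subset\cup_{j\in J_{n-1}}D_j^{n-1}$ and fix $x\in A_Q$. First I would locate a level-$n$ box $D_{i_0}^n\ni x$ whose parent box lies in $J_{n-1}$, so that $i_0$ belongs to the candidate set that Algorithm \ref{discrete:subdivision:algorithm} subjects to selection at level $n$; this is possible because $x$ sits inside a retained coarse box by the induction hypothesis and the covers are nested. I would then reproduce the chain construction of Proposition \ref{lower}: using \eqref{n:contained} and Lemma \ref{disc:compact}(a), I build indices $i_{k+1}\in\varphi_n(i_k)$ with $f^{-k}(x)\in D_{i_k}^n$ for all $k\in\N$.

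The crux is to guarantee that this entire chain stays inside the candidate collection, so that $i_0$ is actually kept by the selection step rather than discarded. Here both the invariance Lemma \ref{disc:compact}(b) and the induction hypothesis are essential: since $f^{-k}(x)\in A_Q$ for every $k$, each chain point again lies in a box retained at level $n-1$, and nestedness lets me choose the $i_k$ among the children of $J_{n-1}$-boxes. The mild technical nuisance is that a $\rho_n$-cover may overlap, so the level-$n$ box carrying $f^{-k}(x)$ need not be unique; I would resolve this by selecting, at each step, a child of a retained coarse box rather than an arbitrary box containing $f^{-k}(x)$. Granting this, the whole chain survives the selection, so $i_0\in J_n$ and $x\in\cup_{j\in J_n}D_j^n$.

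With both inclusions in hand, \eqref{sandwiched} holds. Finally, the hypotheses of the proposition are precisely those of Proposition \ref{upper}, so $\dist(A_{\Omega_n}^{\varphi_n},A_Q)\to 0$; combined with $A_Q\subset\cup_{j\in J_n}D_j^n\subset A_{\Omega_n}^{\varphi_n}$ and the monotonicity of $\dist(\cdot,A_Q)$ under inclusion, this forces $\dist(\cup_{j\in J_n}D_j^n,A_Q)\to 0$, which is the claimed convergence from above.
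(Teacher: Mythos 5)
Your architecture matches the paper's: the right-hand inclusion read off from the pruning rule, the left-hand inclusion by induction with the chain construction of Proposition \ref{lower}, and convergence from Proposition \ref{upper} at the end (the paper packages your inductive step as Lemma \ref{disc:compact}(c) plus Proposition \ref{lower} applied to $\tilde Q=\cup_{j\in J_n}D_j^n$ with $\varphi_{n+1}$ restricted to $J_{n+1}^+$). However, the step you yourself call the crux is not actually closed by what you wrote, and this is a genuine gap. The next chain index must be drawn from $\varphi_n(i_k)$, and condition \eqref{n:contained} only guarantees that the boxes $D_j^n$ with $j\in\varphi_n(i_k)$ \emph{jointly} cover the point $f^{-(k+1)}(x)$; it does not guarantee that any box of $\varphi_n(i_k)$ containing that point is a child of a $J_{n-1}$-box. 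Your induction hypothesis $A_Q\subset\cup_{j\in J_{n-1}}D_j^{n-1}$ only says that $f^{-(k+1)}(x)$ lies in \emph{some} retained coarse box; since covers may overlap, the point can simultaneously lie in a discarded coarse box, and the only boxes of $\varphi_n(i_k)$ containing it could be children of that discarded box. Then none of them is a candidate, the chain dies under the dynamics restricted to $J_n^+$, and Algorithm \ref{discrete:global:algorithm} may discard $i_0$. So ``selecting a child of a retained coarse box'' is not an available move: the constraint $i_{k+1}\in\varphi_n(i_k)$ may leave you no such choice. The same objection applies to your initial choice of $i_0$: under your hypothesis alone, a level-$n$ box containing $x$ whose parent lies in $J_{n-1}$ need not exist.

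The repair is to strengthen the induction hypothesis from the sandwich to the statement that \emph{every} level-$(n-1)$ box meeting $A_Q$ belongs to $J_{n-1}$. This stronger statement is exactly what the chain argument delivers: the proof of Proposition \ref{lower} shows that every box containing a point of $A_Q$ carries an infinite admissible chain, not merely that $A_Q$ is covered by retained boxes. With it, candidacy becomes automatic rather than something to be engineered: any level-$n$ box containing a point $y\in A_Q$ is, by nestedness, contained in some level-$(n-1)$ box, which meets $A_Q$ and hence lies in $J_{n-1}$; therefore every admissible choice of $i_0$ and of $i_{k+1}$ (each box contains $f^{-k}(x)\in A_Q$ by Lemma \ref{disc:compact}(b)) is a candidate, the whole chain survives the pruning, and the strengthened statement propagates to level $n$, from which \eqref{sandwiched} follows since $\Omega_n$ covers $Q$. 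For what it is worth, the paper's own write-up skates over the same point --- it verifies the covering hypothesis of Proposition \ref{lower} for the unrestricted map $\varphi_{n+1}$ but then applies that proposition to the map restricted to $J_{n+1}^+$ --- so you have correctly located where the real difficulty sits; you just have not resolved it.
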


\begin{proof}
By definition of the discrete global attractor, by construction of the 
index set $J_n$, and since $J_n^+\subset I_n$, the inclusion 
$\big(\cup_{j\in J_n}D^n_j\big)\subset A^{\varphi_n}_{\Omega_n}$
is correct for any $n\in\N$.

Since $J_0=I^{\varphi_0}_{\Omega_0}$, it follows from condition \eqref{n:contained}
and Proposition \ref{lower} that
\[A_Q\subset\big(\cup_{j\in J_0}D^0_j\big)=A^{\varphi_0}_{\Omega_0}.\]
Assume that \eqref{sandwiched} holds for some $n\in\N$.
Then $\tilde\Omega:=\{D_j^{n+1}:j\in J_{n+1}^+\}$ is a $\rho_{n+1}$-cover of the set 
$\tilde Q:=\cup_{j\in J_n}D^n_j$, and condition \eqref{n:contained}
implies that 
\[\big(f^{-1}(D_i^{n+1})\cap\tilde Q\big)\subset\big(\cup_{j\in\varphi_n(i)}D_j^{n+1}\big)
\quad\forall\,i\in J_{n+1}^+.\]
Thus Lemma \ref{disc:compact} part (c) and Proposition \ref{lower} applied to $\tilde Q$
and $\varphi_{n+1}$ restricted to $J_{n+1}^+$ yield
\[A_Q=A_{\tilde Q}\subset\big(\cup_{j\in J_{n+1}}D^{n+1}_j\big).\]
By induction, inclusion \eqref{sandwiched} holds for all $n\in\N$.

In view of Propositions \ref{lower} and \ref{upper}, 
the sets $\cup_{j\in J_n}D^n_j$ converge to $A_Q$ from above.
\end{proof}


\begin{algorithm}[p]
\DontPrintSemicolon
\KwIn{index set $I$, mapping $\varphi$}
\KwOut{index set $I_\Omega^\varphi$}

\medskip

\tcc{initialize removed, critical and noncritical indices}
$J_r\gets\{i\in I:\varphi(i)=\emptyset\}$\;
$J_c\gets\{i\in I: \varphi(i)\cap J_r\neq\emptyset\}$\;
$J_{nc}\gets I\setminus(J_r\cup J_c)$\;

\medskip

\tcc{remove indices $i$ with $\varphi^k(i)=\emptyset$ for some $k$ recursively}
\While{$\exists i\in J_c: \varphi(i)\cap(J_c\cup J_{nc})=\emptyset$}{
$J_c\gets J_c\setminus\{i\}$\; 
$J_r\gets J_r\cup\{i\}$\;
\For{$j\in J_{nc}$}{
\If{$i\in\varphi(j)$}{
$J_{nc}\gets J_{nc}\setminus\{j\}$\;
$J_c\gets J_c\cup\{j\}$\;
}}}

\Return{$J_c\cup J_{nc}$}\;
\caption{A global scheme for the approximation of $A_Q$. \label{discrete:global:algorithm}}
\end{algorithm}

\begin{algorithm}[p]
\DontPrintSemicolon
\KwIn{mappings $\varphi_n$, nested $\Omega_n=\{D_i^n\subset\R^d: i\in I_n\}$, $n\in\N$}
\KwOut{index set $J_n$}

\medskip

$J_0^+\gets I_0$\;
\For{$n\gets 0$ \textbf{\textup{to}} $\infty$}{
\tcc{compute discrete attractor at current level}
$J_n\gets$ Algorithm \ref{discrete:global:algorithm} applied to $(J_n^+,\varphi_n)$\;
\textbf{break} upon user request\;
\tcc{refine cover of current discrete attractor}
$J_{n+1}^+\gets\{i\in I_{n+1}:D_i^{n+1}\subset D_j^n\ \text{for some}\ j\in J_n\}$\;
}

\Return{$J_n$}\;
\caption{A subdivision scheme for the approximation of $A_Q$. \label{discrete:subdivision:algorithm}}
\end{algorithm}


\subsection{A provably convergent implementation} \label{sec:imp}

For simplicity, we assume that the mapping 
$f^{-1}$ is $L$-Lipschitz, and we limit ourselves to an implementation 
which does not exploit higher order Taylor terms of $f^{-1}$.
For more elaborate overapproximations, which can be treated in 
a similar way, we refer to \cite{Ju00}.

Let $Q$ be a box, let $\rho_0:=\diam(Q)$,
and set $\Omega_0:=\{D_0^0\}$ with $D_0^0=Q$ and $I_0=\{0\}$.
We define a nested sequence of box covers by induction.
Let $\Omega_n=\{D_i^n:i\in I_n\}$ with index set 
$I_n=\{0,\ldots,2^{nd}-1\}$
be a $\rho_n$-cover of $Q$ consisting of boxes $D^n_i$.
Subdivide each $D^n_i$ into $2^d$ commensurate subboxes 
$D^{n+1}_{i2^d},\ldots,D^{n+1}_{(i+1)2^d-1}$, 
define $I_{n+1}:=\{0,\ldots,2^{(n+1)d}-1\}$ and set $\rho_{n+1}:=\frac12\rho_n$.
Then $\Omega_{n+1}=\{D_i^{n+1}:i\in I_{n+1}\}$
is $\rho_{n+1}$-cover of $Q$ consisting of boxes $D^{n+1}_i$.

To construct an overapproximation of $f^{-1}(D^n_i)$ for a given $D^n_i\in\Omega_n$, 
we choose $M\in\N_1$ and decompose each $D_i^n$ into $M^d$ commensurate subboxes 
$E^{i,n}_0,\ldots,E^{i,n}_{M^d-1}$ 
with centers $z^{i,n}_0,\ldots,z^{i,n}_{M^d-1}$
and diameter $\varrho_n:=\rho_n/M$.
Now we define the discrete dynamics by
\begin{equation} \label{phi:for:dis}
\varphi_n(i):=\{j\in I_n: \cup_{l=0}^{M^d-1}\{f^{-1}(z^{i,n}_{l})\}+B_{L\varrho_n}(0)\cap D_j^n\neq\emptyset\}
\end{equation}
and verify that they satisfy our sufficient conditions for convergence 
of the resulting numerical method.

\begin{proposition}
The discrete mappings $\varphi_n$ constructed in \eqref{phi:for:dis} satisfy
conditions \eqref{not:too:far} and \eqref{n:contained}.
\end{proposition}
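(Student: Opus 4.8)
The plan is to verify the two conditions separately, using only the $L$-Lipschitz property of $f^{-1}$ together with the geometry of the box subdivision. The single observation driving everything is that the centers $z^{i,n}_0,\ldots,z^{i,n}_{M^d-1}$ resolve $D_i^n$ at scale $\varrho_n=\rho_n/M$ in the following sense: since the subboxes $E^{i,n}_l$ cover $D_i^n$ and each has diameter $\varrho_n$, every $w\in D_i^n$ lies in some $E^{i,n}_l$, and the central symmetry of the box $E^{i,n}_l$ about its center $z^{i,n}_l$ gives $\|w-z^{i,n}_l\|\le\varrho_n/2$ for any norm. Applying the Lipschitz bound to the images yields $\|f^{-1}(w)-f^{-1}(z^{i,n}_l)\|\le L\varrho_n/2$, so every point of $f^{-1}(D_i^n)$ lies within distance $L\varrho_n/2$ of the finite set $\{f^{-1}(z^{i,n}_l):l\}$. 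Since the $z^{i,n}_l$ themselves lie in $D_i^n$, we also have $f^{-1}(z^{i,n}_l)\in f^{-1}(D_i^n)$, which I would use for the reverse estimate.

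To prove \eqref{n:contained}, I would fix $i\in I_n$ and $x\in f^{-1}(D_i^n)\cap Q$, write $x=f^{-1}(w)$ with $w\in D_i^n$, and pick $l$ with $\|w-z^{i,n}_l\|\le\varrho_n/2$ as above. The estimate from the first paragraph then shows $x\in\{f^{-1}(z^{i,n}_l)\}+B_{L\varrho_n}(0)$, since $L\varrho_n/2<L\varrho_n$. Because $\Omega_n$ covers $Q$ and $x\in Q$, there is some $j\in I_n$ with $x\in D_j^n$, and this $j$ satisfies $(\cup_l\{f^{-1}(z^{i,n}_l)\}+B_{L\varrho_n}(0))\cap D_j^n\ni x$, hence $j\in\varphi_n(i)$ by \eqref{phi:for:dis} and $x\in D_j^n\subset\cup_{j'\in\varphi_n(i)}D_{j'}^n$. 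This is exactly \eqref{n:contained}.

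To prove \eqref{not:too:far}, I would fix $i\in I_n$, take $j\in\varphi_n(i)$ and an arbitrary point $p\in D_j^n$, and bound $\dist(p,f^{-1}(D_i^n))$. By the definition of $\varphi_n(i)$ there exist a witness $q\in D_j^n$ and an index $l$ with $\|q-f^{-1}(z^{i,n}_l)\|\le L\varrho_n$. Since $p,q\in D_j^n$ and $\diam(D_j^n)\le\rho_n$, the triangle inequality gives $\|p-f^{-1}(z^{i,n}_l)\|\le\rho_n+L\varrho_n$, and because $f^{-1}(z^{i,n}_l)\in f^{-1}(D_i^n)$ this yields $\dist(p,f^{-1}(D_i^n))\le\rho_n+L\varrho_n=\rho_n(1+L/M)$. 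Taking the supremum over $p$ and over $i\in I_n$ bounds the quantity in \eqref{not:too:far} by $\rho_n(1+L/M)$, which tends to $0$ because $\rho_n\searrow0$ and $M$ is fixed.

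The arguments are elementary; the only point needing genuine care is the correctness of the overapproximation, that is, condition \eqref{n:contained}. The danger is that an inflation radius chosen too small could let a point of $f^{-1}(D_i^n)\cap Q$ escape the union $\cup_{j\in\varphi_n(i)}D_j^n$, which would break the rigorous enclosure $A_Q\subset A_{\Omega_n}^{\varphi_n}$ underlying Propositions \ref{lower} and \ref{sub:div:prop}. The crux is therefore to confirm that the radius $L\varrho_n$ in \eqref{phi:for:dis} dominates the worst-case gap $L\varrho_n/2$ between $f^{-1}(D_i^n)$ and the sampled images; once this is checked, the remaining estimates are mere triangle inequalities controlled by $\rho_n$ and the fixed oversampling factor $M$.
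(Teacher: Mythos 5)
Your proof is correct and follows essentially the same route as the paper: the Lipschitz estimate $\|f^{-1}(w)-f^{-1}(z^{i,n}_l)\|\le L\varrho_n$ on each subbox $E^{i,n}_l$ yields \eqref{n:contained}, and a triangle inequality through a witness point of the intersection in \eqref{phi:for:dis} yields \eqref{not:too:far}. If anything, your bound $\rho_n(1+L/M)$ for the second condition is the more careful one --- the paper states $(L+1)\varrho_n$, which implicitly uses $\varrho_n$ where the diameter $\rho_n$ of $D_j^n$ is needed --- but both bounds tend to zero, so the conclusion is the same.
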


\begin{proof}
For every $x\in D^n_i$ and $D^n_i\in\Omega_n$ , there exists some 
$l\in\{0,\ldots,M^d-1\}$ with $x\in E^{i,n}_l$.
Since 
\[\|f^{-1}(x)-f^{-1}(z^{i,n}_l)\|
\le L\|x-z^{i,n}_l\|\le L\varrho_n,\]
condition \eqref{n:contained} holds.
By definition of the mappings $\varphi_n$, we have
\begin{align*} 
&\lim_{n\to\infty}\sup_{i\in I_n}\,\dist(\cup_{j\in\varphi_n(i)}D_j^n,f^{-1}(D_i^n))
\le\lim_{n\to\infty}(L+1)\varrho_n=0,
\end{align*}
which is \eqref{not:too:far}.
\end{proof}

Taking $M=1$ in the above, we obtain a provably convergent 
algorithm that only needs a single evaluation of $f^{-1}$ per box.

\section{Continuous-time dynamics}

Consider an autonomous ordinary differential equation
\begin{equation}\label{ode}
\dot x(t)=g(x(t)),\quad t\in\R,
\end{equation} 
with continuous right-hand side $g:\R^d\to\R^d$.
Again, we will only be interested in the dynamics near a compact set $Q\subset\R^d$, 
and we assume throughout this section that the solution map $\phi:\R\times\R^d\to\R^d$ 
of \eqref{ode} is well-defined and continuous.

\begin{definition}
The global attractor relative to $Q$ is the set
\begin{equation} \label{def:AQ}
A_Q:=\cap_{t\in\R_+}\phi(t,Q).
\end{equation}
\end{definition}

As in the discrete-time case, some characteristics of the attractor are immediate.

\begin{lemma}\label{compact}
The set $A_Q$ is compact and has the following properties.
\begin{itemize}
\item [(a)] We have $A_Q=\{x\in\R^d:\phi(-t,x)\in Q\ \forall\,t\in\R_+\}$.
\item [(b)] We have $\phi(-t,A_Q)\subset A_Q\quad\forall\,t\in\R_+$.
\item [(c)] If $\tilde Q\subset Q$ is compact and $A_Q\subset\tilde Q$,
then $A_Q=A_{\tilde Q}$.
\end{itemize}
\end{lemma}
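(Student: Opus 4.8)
The plan is to mirror the proof of Lemma \ref{disc:compact} almost verbatim, replacing the discrete iteration $f^{-k}$ by the continuous backward flow $\phi(-t,\cdot)$ and replacing sums over $k\in\N$ by suprema over $t\in\R_+$. First I would establish compactness: for each fixed $t\in\R_+$ the set $\phi(t,Q)$ is the continuous image of a compact set and hence compact, so $A_Q=\cap_{t\in\R_+}\phi(t,Q)$ is an intersection of compact sets and is therefore itself compact. This uses only continuity of $\phi$, which is assumed throughout the section.

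For part (a), I would unwind the definition \eqref{def:AQ}. The point is that $x\in\phi(t,Q)$ means exactly that there is some $q\in Q$ with $\phi(t,q)=x$, which by the group property $\phi(-t,\phi(t,q))=q$ of the flow is equivalent to $\phi(-t,x)\in Q$. Requiring this for every $t\in\R_+$ yields precisely the set on the right-hand side, so (a) is a reformulation of the definition just as in the discrete case. The only subtlety compared to the discrete setting is that one must invoke invertibility of the flow in time, i.e.\ the identities $\phi(0,\cdot)=\mathrm{id}$ and $\phi(s,\phi(t,\cdot))=\phi(s+t,\cdot)$; these hold because $\phi$ is the solution map of the autonomous equation \eqref{ode}.

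For part (b), I would fix $t\in\R_+$ and $x\in A_Q$ and show $\phi(-t,x)\in A_Q$ via part (a). Using the flow property,
\[\phi(-s,\phi(-t,x))=\phi(-(s+t),x)\in Q\quad\forall\,s\in\R_+,\]
where the membership holds because $x\in A_Q$ and $s+t\in\R_+$. By part (a) this gives $\phi(-t,x)\in A_Q$. Part (c) is then identical in structure to Lemma \ref{disc:compact}(c): the inclusion $A_{\tilde Q}\subset A_Q$ follows from $\tilde Q\subset Q$ together with monotonicity of $\phi(t,\cdot)$ on sets, and the reverse inclusion follows by taking $x\in A_Q$, using part (b) to conclude $\phi(-t,x)\in A_Q\subset\tilde Q$ for all $t\in\R_+$, and then applying part (a) with $\tilde Q$ in place of $Q$.

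I do not expect a genuine obstacle here, since the argument is a direct transcription of the discrete-time lemma. The one point requiring care is the invertibility of the flow: in the discrete case $f$ is assumed to be a homeomorphism, whereas here $g$ is merely continuous. I would therefore want to confirm that the standing assumption ``the solution map $\phi$ is well-defined and continuous'' is meant to supply a genuine two-sided flow (so that $\phi(-t,\cdot)$ is the inverse of $\phi(t,\cdot)$), which is what makes the equivalence in part (a) and the cancellation in part (b) legitimate. Granting that, the remaining steps are routine set-theoretic manipulations.
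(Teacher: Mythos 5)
Your proof is correct and takes essentially the same route as the paper: compactness as an intersection of continuous images of the compact set $Q$, part (a) as a reformulation of the definition via the group property of the flow, part (b) from the identity $\phi(-s,\phi(-t,x))=\phi(-(s+t),x)\in Q$ combined with part (a), and part (c) from monotonicity of the intersection together with parts (a) and (b). The invertibility concern you raise is settled by the standing assumption that the solution map $\phi:\R\times\R^d\to\R^d$ of \eqref{ode} is well-defined and continuous on all of $\R\times\R^d$, which for the autonomous equation supplies the two-sided group property $\phi(s,\phi(t,\cdot))=\phi(s+t,\cdot)$ that your argument (and the paper's) uses.
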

\begin{proof}
For any $t\in\R_+$, the mapping $x\mapsto\phi(t,x)$ is continuous,  
so $\phi(t,Q)$ is compact, and hence the intersection $A_Q$ of these sets
is compact as well. 
Statement (a) is a reformulation of the definition given in \eqref{def:AQ}.

(b) Let $t\in\R_+$ and $x\in A_Q$ be given.
By part (a), we have
\[\phi(-s,\phi(-t,x))=\phi(-(s+t),x)\in Q\quad\forall s\in\R_+,\]
so that $\phi(-t,x)\in A_Q$ follows from part (a).

(c) It is clear that
\[A_{\tilde Q}=\cap_{t\in\R_+}\phi(t,\tilde Q)\subset\cap_{t\in\R_+}\phi(t,Q)=A_Q.\]
Now let $x\in A_Q$. By part (b), we have
\[\phi(-t,x)\in A_Q\subset\tilde Q\quad\forall\,t\in\R_+,\]
so part (a) implies $x\in A_{\tilde Q}$.
\end{proof}

\subsection{A global discretization scheme}

We reduce the continuous-time dynamics to a discrete-time system
by considering the time-$h$ map $f(x):=\phi(h,x)$ with inverse 
$f^{-1}(x)=\phi(-h,x)$.
The discrete global attractors are then computed by applying 
Algorithm \ref{discrete:global:algorithm} to covers $\Omega$ and 
discrete maps $\varphi$ as in the discrete-time case.
The following result and its proof are identical with Proposition \ref{lower}
up to notation.

\begin{proposition} \label{ct:lower}
Fix an arbitrary $h>0$.
If $\Omega=\{D_i\subset\R^d: i\in I\}$ is a $\rho$-cover of $Q$,
and if the mapping $\varphi:I\to 2^I$ satisfies
\begin{equation} \label{ct:contained}
\big(\phi(-h,D_i)\cap Q\big)\subset\big(\cup_{j\in\varphi(i)}D_j\big)\quad\forall i\in I,
\end{equation}
then $A_Q\subset A_\Omega^{\varphi}$.
\end{proposition}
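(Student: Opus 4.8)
The authors explicitly state that "The following result and its proof are identical with Proposition \ref{lower} up to notation," so the plan is simply to transcribe the proof of Proposition \ref{lower}, replacing the discrete inverse iterate $f^{-k}$ by the continuous backward flow $\phi(-kh,\cdot)$ and the discrete characterization of the attractor by its continuous analog in Lemma \ref{compact}(a). I would fix an arbitrary point $x\in A_Q$ and, since $\Omega$ covers $Q$, select $i_0\in I$ with $x\in D_{i_0}$. The core is an induction on $k$ proving that for every $k\in\N$ there exists $i_k\in\varphi^k(i_0)$ with $\phi(-kh,x)\in D_{i_k}$, which is the exact translation of the induction hypothesis \eqref{IH}.

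\medskip

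\noindent\textbf{The induction.} The base case $k=0$ is immediate because $\varphi^0(i_0)=\{i_0\}$ and $\phi(0,x)=x\in D_{i_0}$. For the inductive step, I would assume $i_k\in\varphi^k(i_0)$ with $\phi(-kh,x)\in D_{i_k}$ and first observe that $\phi(-(k+1)h,x)\in Q$. This is where Lemma \ref{compact}(a) does the work of Lemma \ref{disc:compact}(a): since $x\in A_Q$, the backward orbit $\phi(-t,x)$ stays in $Q$ for all $t\in\R_+$, so in particular $\phi(-(k+1)h,x)\in Q$. Next, using the semigroup property $\phi(-(k+1)h,x)=\phi(-h,\phi(-kh,x))$ together with $\phi(-kh,x)\in D_{i_k}$, I obtain $\phi(-(k+1)h,x)\in\phi(-h,D_{i_k})$. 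Combining these two memberships gives
\[
\phi(-(k+1)h,x)\in\big(\phi(-h,D_{i_k})\cap Q\big),
\]
and condition \eqref{ct:contained} applied at index $i_k$ then places this point in $\cup_{j\in\varphi(i_k)}D_j$. Hence there is some $i_{k+1}\in\varphi(i_k)\subset\varphi^{k+1}(i_0)$ with $\phi(-(k+1)h,x)\in D_{i_{k+1}}$, completing the step.

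\medskip

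\noindent\textbf{Conclusion.} Once the induction is established, for every $k\in\N$ the set $\varphi^k(i_0)$ contains the index $i_k$ and is therefore nonempty, so by the definition of $I_\Omega^\varphi$ we have $i_0\in I_\Omega^\varphi$, whence $x\in D_{i_0}\subset A_\Omega^\varphi$. Since $x\in A_Q$ was arbitrary, this yields $A_Q\subset A_\Omega^\varphi$.

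\medskip

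There is essentially no genuine obstacle here, which is precisely why the authors flag the proof as a notational copy. The only point requiring any care is the translation dictionary between the two settings: the discrete inverse-iterate identity $f^{-l}\circ f^{-k}=f^{-(k+1)l}$ used in the original becomes the flow semigroup law $\phi(-h,\phi(-kh,\cdot))=\phi(-(k+1)h,\cdot)$, and one must ensure the step size $h$ is carried consistently so that the backward map appearing in \eqref{ct:contained} matches the map $\phi(-h,\cdot)$ generating the orbit. Because $h>0$ is fixed at the outset and the attractor $A_Q$ is defined via the full continuous backward flow, there is no mismatch: staying in $Q$ for all real $t\ge0$ certainly implies staying in $Q$ at the discrete times $kh$. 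Thus the argument goes through verbatim.
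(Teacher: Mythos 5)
Your proof is correct and is exactly the paper's intended argument: the paper prints no separate proof for this proposition, declaring it ``identical with Proposition \ref{lower} up to notation,'' and your transcription (replacing $f^{-k}$ by $\phi(-kh,\cdot)$, Lemma \ref{disc:compact}(a) by Lemma \ref{compact}(a), and using the semigroup law to get $\phi(-(k+1)h,x)\in\phi(-h,D_{i_k})\cap Q$ before applying \eqref{ct:contained}) carries out precisely that translation. The only blemish is the typo in your closing remarks, where $f^{-l}\circ f^{-k}=f^{-(k+1)l}$ should read $f^{-l}\circ f^{-k}=f^{-(k+l)}$, but this does not affect the argument.
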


Now we prove an analog of Proposition \ref{upper}.

\begin{proposition} \label{ct:upper}
Let $(h_n)_{n=0}^\infty,(\rho_n)_{n=0}^\infty\subset\R_+$ be sequences with 
\[\lim_{n\to\infty}h_n=\lim_{n\to\infty}\rho_n=0.\]
If $\Omega_n=\{D_i^n\subset\R^d: i\in I_n\}$ are $\rho_n$-covers of $Q$,
and if $\varphi_n:I_n\to 2^{I_n}$ are mappings such that
\begin{align} 
&\lim_{n\to\infty}\sup_{i\in I_n}\sup_{j\in\varphi_n(i)}\dist(D_j^n,D_i^n)=0,
\label{not:too:far:1}\\
&\lim_{n\to\infty}\sup_{i\in I_n}\sup_{j\in\varphi_n(i)}\sup_{x\in D^n_j}\sup_{z\in D^n_i}
\|h_n^{-1}(x-z)+g(z)\|=0,\label{not:too:far:2}
\end{align}
then $\lim_{n\to\infty}\dist(A_{\Omega_n}^{\varphi_n},A_Q)=0$.
\end{proposition}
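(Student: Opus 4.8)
The plan is to argue by contradiction and to exploit the fact that, once the time-step $h_n$ tends to zero, a $\varphi_n$-chain of boxes is nothing but an Euler polygon for the reversed equation $\dot y=-g(y)$. This replaces the fixed-map iteration used in Proposition~\ref{upper} by a genuine function-space compactness argument, since reaching a fixed continuous time now requires a growing number of $\varphi_n$-steps.

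Suppose the conclusion fails. Then there exist $\eps>0$ and indices $i_0^n\in I_{\Omega_n}^{\varphi_n}$ such that, after passing to a subsequence, $\dist(D_{i_0^n}^n,A_Q)\ge\eps$ for all $n$. As in Proposition~\ref{upper}, the defining property of $I_{\Omega_n}^{\varphi_n}$ yields infinite chains $(i_k^n)_{k=0}^\infty$ with $i_{k+1}^n\in\varphi_n(i_k^n)$, and I would select points $x_k^n\in D_{i_k^n}^n\subset Q$, taking $x_0^n$ so that $\dist(x_0^n,A_Q)\ge\eps$. The key step is to read condition~\eqref{not:too:far:2} as an Euler consistency estimate: with $z=x_k^n$ and $x=x_{k+1}^n$ it gives
\begin{equation*}
x_{k+1}^n=x_k^n-h_n\,g(x_k^n)+h_n e_k^n,\qquad\sup_k\|e_k^n\|\le\delta_n,
\end{equation*}
where $\delta_n$ is the supremum appearing in \eqref{not:too:far:2}, so that $\delta_n\to0$. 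Defining the piecewise-linear interpolant $y_n:[0,\infty)\to\R^d$ by $y_n(kh_n):=x_k^n$ then exhibits $y_n$ as an Euler polygon for $\dot y=-g(y)$ with step $h_n$ and vanishing defect.

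Next I would pass to the limit. The slopes of $y_n$ equal $-g(x_k^n)+e_k^n$, hence are bounded by $\sup_{x\in Q}\|g(x)\|+\delta_n$; together with the fact that all vertices lie in the compact set $Q$, this makes every $y_n$ uniformly bounded, confined to a fixed compact neighborhood $K$ of $Q$, and Lipschitz with an $n$-uniform constant. By the Arzel\`a--Ascoli theorem and a diagonal extraction over $T\in\N$, a subsequence converges uniformly on compact time intervals to a limit $y$. Theorems~0.3.4 and~1.4.1 of \cite{AC84}, applied to the field $-g$, then guarantee that $y$ solves $\dot y=-g(y)$, i.e.\ $y(t)=\phi(-t,x_0)$ with $x_0:=\lim_n x_0^n=y(0)$. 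Since each $y(t)$ is a limit of points of the compact set $Q$, we have $\phi(-t,x_0)\in Q$ for all $t\in\R_+$, and Lemma~\ref{compact}(a) forces $x_0\in A_Q$, contradicting $\dist(x_0,A_Q)=\lim_n\dist(x_0^n,A_Q)\ge\eps$.

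The main obstacle is precisely this coupling between the vanishing step $h_n$ and the length of the chains, which rules out the clean induction on $k$ of Proposition~\ref{upper} and demands the compactness argument above. Two points need care: securing a Lipschitz bound on the $y_n$ that is uniform in $n$ and valid on every $[0,T]$ simultaneously, so that Arzel\`a--Ascoli and the diagonal extraction go through, and checking that \eqref{not:too:far:2} indeed delivers a \emph{uniform}-in-$k$ consistency defect. Here condition~\eqref{not:too:far:1} is convenient, since it guarantees directly that each Euler segment stays within a vanishing distance of its base point $x_k^n$, which is exactly what is needed to replace the frozen slope $-g(x_k^n)$ by $-g(y_n(t))$ along the segment via uniform continuity of $g$ on $K$ when verifying that the limit $y$ solves the differential equation \eqref{ode}.
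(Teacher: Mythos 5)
Your proposal is correct and follows essentially the same route as the paper: argue by contradiction, extract infinite $\varphi_n$-chains, interpolate them as Euler polygons with vanishing defect (the paper parameterizes them on $(-\infty,0]$ for the field $g$ rather than on $[0,\infty)$ for $-g$, a purely cosmetic difference), pass to a limit solution via compactness and the cited theorems of Aubin--Cellina, use \eqref{not:too:far:1} to keep the polygons near $Q$ so the limit stays in $Q$, and conclude with Lemma \ref{compact}(a) to contradict $\dist(x_0,A_Q)\ge\eps$.
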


\begin{proof}
If this statement is false, then there exist $\eps>0$
and $i^n_0\in I^{\varphi_n}_{\Omega_n}$, $n\in\N$, such that,
after passing to a subsequence, we have
\[\dist(D^n_{i^n_0},A_Q)\ge\eps\quad\forall\,n\in\N.\]
By definition of $I_{\Omega_n}^{\varphi_n}$, for every $n\in\N$, 
there exists  
$(i_k^n)_{k=1}^\infty\subset I_n$ with
\[i^n_{k+1}\in\varphi(i^n_k)\quad \forall\,k\in\N.\]
Fix arbitrary $x^n_k\in D^n_{i^n_k}$ for all $k,n\in\N$ and define continuous
piecewise linear functions $y_n:(-\infty,0]\to\R^d$ by
\[y_n(t):=\frac{t+(k+1)h_n}{h_n}x^n_k-\frac{t+kh_n}{h_n}x^n_{k+1}\quad\forall
t\in[-(k+1)h_n,-kh_n],\ k\in\N.\]
By condition \eqref{not:too:far:1}, and since for all $k\in\N$, we have
\[\|y_n(t)-x^n_k\|\le\|x^n_{k+1}-x^n_k\|\le\dist(D_j^n,D_i^n)+\rho_n
\quad\forall\,t\in[-(k+1)h_n,-kh_n],\]
these functions satisfy
\begin{equation} \label{close:to:node}
\lim_{n\to\infty}\sup_{k\in\N}\sup_{t\in[-(k+1)h_n,-kh_n]}\|y_n(t)-x^n_k\|=0.
\end{equation}
Because of condition \eqref{not:too:far:2}, we find
\begin{equation}
\left.\begin{aligned}
&\lim_{n\to\infty}\sup_{k\in\N}\sup_{t\in(-(k+1)h_n,-kh_n)}\|y_n'(t)+g(x^n_k)\|\\
&=\lim_{n\to\infty}\sup_{k\in\N}\sup_{t\in(-(k+1)h_n,-kh_n)}
\|h_n^{-1}(x^n_k-x^n_{k+1})+g(x^n_k)\|=0.
\end{aligned}\right\}
\label{space:est}
\end{equation}
Since $\max_{x\in Q}\|g(x)\|<\infty$, we may conclude from this statement that
\begin{equation} \label{der:bdd}
\sup_{n\in\N}\esssup_{t\in(-\infty,0]}\|y_n'(t)\|<\infty,
\end{equation}
and by construction, we have
\begin{equation} \label{f:bdd}
\sup_{n\in\N}\sup_{t\in(-\infty,0]}\|y_n(t)\|\le\max_{x\in Q}\|x\|<\infty.
\end{equation}
Because of estimates \eqref{close:to:node}, \eqref{space:est}, \eqref{der:bdd} 
and \eqref{f:bdd} and Theorems 0.3.4 and 1.4.1 in \cite{AC84},
there exists an absolutely continuous function 
$y:(-\infty,0]\to\R^d$ with
\[y'(t)=g(y(t))\quad\text{for almost every}\ t\in(-\infty,0]\]
and such that, along a subsequence, $y_n(t)\to y(t)$ holds for all $t\in(-\infty,0]$. 
Since $g$ is continuous and $y$ is absolutely continuous, it follows that $y'$ 
possesses a continuous representation and $y$ is a $C^1$ solution of \eqref{ode}.
Moreover, it follows from \eqref{close:to:node} and 
$x^n_k\in Q$ for all $k,n\in\N$ that
\[\lim_{n\to\infty}\sup_{t\in(-\infty,0]}\dist(y_n(t),Q)=0,\]
and since $Q$ is compact, we have
\[y(t)\in Q\quad\forall t\in(-\infty,0].\]
Hence $y(0)\in A_Q$ by Lemma \ref{compact} part (a).
On the other hand, we have
\[\dist(y(0),A_Q)=\lim_{n\to\infty}\dist(y_n(0),A_Q)\ge\eps,\]
which is a contradiction.
\end{proof}

\subsection{A subdivision algorithm}

The discrete-time subdivision algorithm given as 
Algorithm \ref{discrete:subdivision:algorithm} can be applied to nested covers 
$\Omega_n$ and the discrete maps $\varphi_n$ discussed above.  
The proof of the following convergence result is completely analogous to that of 
Proposition \ref{sub:div:prop} with $f^{-1}$ being replaced by $\phi(-h_n,\cdot)$
and Propositions \ref{ct:lower} and \ref{ct:upper} being invoked instead of Propositions
\ref{lower} and \ref{upper}.

\begin{proposition}
Let $(h_n)_{n=0}^\infty,(\rho_n)_{n=0}^\infty\subset\R_+$ be sequences 
with $h_n,\rho_n\searrow 0$ as $n\to\infty$,
and let $(\Omega_n)_{n=0}^\infty$ given by $\Omega_n=\{D_i^n\subset\R^d: i\in I_n\}$ 
be a nested sequence of $\rho_n$-covers of $Q$.
If $\varphi_n:I_n\to 2^{I_n}$ are mappings satisfying
\begin{equation} \label{now:contained}
\big(\phi(-h_n,D_i^n)\cap Q\big)\subset\big(\cup_{j\in\varphi_n(i)}D_j^n\big)
\quad\forall i\in I_n
\end{equation}
and conditions \eqref{not:too:far:1} and \eqref{not:too:far:2}, then the 
index sets $J_n\subset I_n$ computed by Algorithm \ref{discrete:subdivision:algorithm} satisfy
\begin{equation*} 
A_Q\subset\big(\cup_{j\in J_n}D^n_j\big)\subset A^{\varphi_n}_{\Omega_n}.
\end{equation*}
In particular, the sets $\cup_{j\in J_n}D^n_j$ converge to $A_Q$ from above.
\end{proposition}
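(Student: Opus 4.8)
The plan is to follow the proof of Proposition~\ref{sub:div:prop} line for line, replacing $f^{-1}$ everywhere by the time-$h_n$ inverse map $\phi(-h_n,\cdot)$ and invoking the continuous-time results Lemma~\ref{compact} and Propositions~\ref{ct:lower} and~\ref{ct:upper} in place of Lemma~\ref{disc:compact} and Propositions~\ref{lower} and~\ref{upper}. As before, the two inclusions in the sandwich are handled separately: the right-hand inclusion $(\cup_{j\in J_n}D^n_j)\subset A^{\varphi_n}_{\Omega_n}$ is purely combinatorial, while the left-hand inclusion $A_Q\subset(\cup_{j\in J_n}D^n_j)$ is obtained by induction on the refinement level $n$. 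The right-hand inclusion carries over unchanged, since its justification rests only on the definition of the discrete global attractor, the construction of $J_n$ inside Algorithm~\ref{discrete:subdivision:algorithm}, and the containment $J_n^+\subset I_n$, none of which refer to the nature of the underlying dynamics.

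The substance lies in the inductive proof of $A_Q\subset(\cup_{j\in J_n}D^n_j)$. For the base case $J_0=I^{\varphi_0}_{\Omega_0}$, condition~\eqref{now:contained} and Proposition~\ref{ct:lower} give $A_Q\subset(\cup_{j\in J_0}D^0_j)$. For the inductive step I would set $\tilde Q:=\cup_{j\in J_n}D^n_j$ and $\tilde\Omega:=\{D^{n+1}_j:j\in J^+_{n+1}\}$; nesting of the covers makes $\tilde\Omega$ a $\rho_{n+1}$-cover of $\tilde Q$, and intersecting~\eqref{now:contained} with the smaller set $\tilde Q\subset Q$ yields
\[\big(\phi(-h_{n+1},D^{n+1}_i)\cap\tilde Q\big)\subset\big(\cup_{j\in\varphi_{n+1}(i)}D^{n+1}_j\big)\quad\forall\,i\in J^+_{n+1}.\]
Since the inductive hypothesis gives $A_Q\subset\tilde Q$ and $\tilde Q$, being a finite union of boxes, is compact, Lemma~\ref{compact}(c) yields $A_Q=A_{\tilde Q}$; a second application of Proposition~\ref{ct:lower}, now to $\tilde Q$ with $\varphi_{n+1}$ restricted to $J^+_{n+1}$, then gives $A_Q=A_{\tilde Q}\subset(\cup_{j\in J_{n+1}}D^{n+1}_j)$ and closes the induction.

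Convergence from above then follows from the sandwich together with Proposition~\ref{ct:upper}: applied to the full covers $\Omega_n$ and maps $\varphi_n$ it gives $\dist(A^{\varphi_n}_{\Omega_n},A_Q)\to0$, whence $\dist(\cup_{j\in J_n}D^n_j,A_Q)\le\dist(A^{\varphi_n}_{\Omega_n},A_Q)\to0$. The only genuinely continuous-time ingredient, namely the passage from the discrete near-trajectories to a true solution of~\eqref{ode} via piecewise-linear interpolants and the Cauchy--Peano-type compactness of~\cite{AC84}, has already been absorbed into Proposition~\ref{ct:upper}. I therefore do not expect a real obstacle; the only points that need care are the descent of the containment condition~\eqref{now:contained} to the subset $\tilde Q$ and the verification that the hypotheses of Lemma~\ref{compact}(c) hold at each step. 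Notably, conditions~\eqref{not:too:far:1} and~\eqref{not:too:far:2} are invoked only on the full index sets $I_n$ through Proposition~\ref{ct:upper}, so no question of their inheritance by the refined index sets $J^+_{n+1}$ arises, and the transcription $f^{-1}\mapsto\phi(-h_n,\cdot)$ is the sole source of potential notational slips.
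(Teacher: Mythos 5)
Your proposal is correct and is precisely the argument the paper intends: the paper's own ``proof'' of this proposition is the single remark that it is completely analogous to that of Proposition~\ref{sub:div:prop} with $f^{-1}$ replaced by $\phi(-h_n,\cdot)$ and Propositions~\ref{ct:lower} and~\ref{ct:upper} invoked in place of Propositions~\ref{lower} and~\ref{upper}, and your write-up carries out exactly this transcription (combinatorial right-hand inclusion, induction via Lemma~\ref{compact}(c) and Proposition~\ref{ct:lower} on $\tilde Q$, convergence via Proposition~\ref{ct:upper}). Your closing observation that \eqref{not:too:far:1} and \eqref{not:too:far:2} enter only through Proposition~\ref{ct:upper} on the full covers, so no inheritance by $J_{n+1}^+$ is needed, is a correct and useful clarification of why the analogy goes through without friction.
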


\subsection{A provably convergent implementation}

For simplicity, assume that $g$ is globally $P$-bounded and $L$-Lipschitz,
i.e. that
\begin{equation} \label{setting}
\|g(x)\|\le P\quad \text{and}\quad\|g(x)-g(z)\|\le L\|x-z\|\quad\forall\,x,z\in\R^d,
\end{equation}
which, in view of the Picard-Lindel\"of theorem and the Gronwall lemma, implies that the standing assumptions of this section hold.

\medskip

Consider the nested sequence $(\Omega_n)_{n=0}^\infty$ of $\rho_n$-covers 
$\Omega_n=\{D_i^n:i\in I_n\}$ of a box $Q$ defined in paragraph \ref{sec:imp},
as well as a decomposition $E^{i,n}_0,\ldots,E^{i,n}_{M^d-1}$ of
each $D_i^n$ into $M^d$ commensurate subboxes 
with centers $z^{i,n}_0,\ldots,z^{i,n}_{M^d-1}$
and diameter $\varrho_n:=\rho_n/M$.
Choose a sequence $(h_n)_{n=0}^\infty\subset\R_+$ of temporal step-sizes with 
$\lim_{n\to\infty}h_n=0$ and $\lim_{n\to\infty}h_n^{-1}\rho_n=0$.

In contrast to the discrete-time case, the mapping $x\mapsto\phi(-h_n,x)$ is not 
explicitly available, so we approximate it by $N$ steps of Euler's scheme with
a finer step-size $\theta_n:=h_n/N$, which is given in terms of the initial 
value $\phi_E(0,x):=x$ and the iteration
\begin{align*}
&\phi_E(-(k+1)\theta_n,x):=\phi_E(-k\theta_n,x)-\theta_n g(\phi_E(-k\theta_n,x)),\quad
k=0,\ldots,N-1.
\end{align*}
We define the discrete overapproximating dynamics by
\begin{equation} \label{cont:time:map}
\varphi_n(i):=\{j\in I_n:\cup_{l=0}^{M^d-1}\{\phi_E(-h_n,z^{i,n}_l)\}+B_{r_n}(0)\cap D^n_j\neq\emptyset\},\quad 
i\in I_n,
\end{equation}
with parameter 
\[r_n:=e^{Lh_n}\varrho_n+\tfrac{1}{2N}Ph_n(e^{Lh_n}-1)\]
and check that this choice induces an overall convergent subdivision algorithm.
We begin by collecting some information on Euler's scheme
in the setting \eqref{setting}.

\begin{lemma}
For any $k\in\N$ and $x,y,z\in\R^d$, Euler's scheme satisfies
the following estimates:
\begin{align}
&\|\phi_E(-h_n,x)-x\|\le Ph_n,\label{E1}\\
&\|\phi_E(-h_n,y)-\phi_E(-h_n,z)\|\le e^{Lh_n}\|y-z\|,\label{E2}\\
&\|\phi(-h_n,x)-\phi_E(-h_n,x)\|\le\tfrac{1}{2N}Ph_n(e^{Lh_n}-1),\label{E3}\\
&\|h_n^{-1}(\phi_E(-h_n,x)-x)+g(x)\|\le\tfrac12 LPh_n.\label{E4}
\end{align}
\end{lemma}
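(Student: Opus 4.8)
The plan is to introduce the shorthand $u_k := \phi_E(-k\theta_n,x)$ for the Euler iterates started at $x$, so that $u_0 = x$ and $u_{k+1} = u_k - \theta_n g(u_k)$ for $k = 0,\ldots,N-1$, and to establish the four estimates in the order \eqref{E1}, \eqref{E2}, \eqref{E4}, \eqref{E3}, since the last one is the only genuinely nontrivial assertion. For \eqref{E1} I would telescope $u_N - u_0 = -\theta_n\sum_{k=0}^{N-1} g(u_k)$, take norms, and use $\|g\|\le P$ together with $N\theta_n = h_n$, which immediately gives $\|\phi_E(-h_n,x) - x\|\le N\theta_n P = Ph_n$. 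The same telescoping also yields the displacement bound $\|u_k - u_0\|\le k\theta_n P$, which I will reuse for \eqref{E4}.

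For \eqref{E2} I would run two Euler trajectories $u_k,v_k$ from $y$ and $z$ and subtract the recursions to obtain $u_{k+1}-v_{k+1} = (u_k-v_k) - \theta_n\bigl(g(u_k)-g(v_k)\bigr)$. The Lipschitz bound gives $\|u_{k+1}-v_{k+1}\|\le(1+L\theta_n)\|u_k-v_k\|$, and an induction yields the factor $(1+L\theta_n)^N$, which I bound by $e^{L\theta_n N} = e^{Lh_n}$ using $1+a\le e^a$. Estimate \eqref{E4} I would prove by rewriting the difference quotient as an average: from the telescoping identity, $h_n^{-1}(\phi_E(-h_n,x)-x) = -\tfrac1N\sum_{k=0}^{N-1} g(u_k)$, so that $h_n^{-1}(\phi_E(-h_n,x)-x)+g(x) = \tfrac1N\sum_{k=0}^{N-1}\bigl(g(u_0)-g(u_k)\bigr)$. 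Each summand is controlled via the Lipschitz property and the displacement bound $\|u_k-u_0\|\le k\theta_n P$, which gives $\tfrac1N\sum_{k=0}^{N-1}Lk\theta_n P = \tfrac12 LP\theta_n(N-1)\le\tfrac12 LPh_n$.

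The main obstacle is \eqref{E3}, the convergence estimate of Euler's scheme against the exact flow; here I would carry out the classical one-step-error analysis adapted to the backward direction. Writing $x(\tau):=\phi(-\tau,x)$, so that $x'=-g(x)$ and hence $\|x(s)-x(k\theta_n)\|\le P|s-k\theta_n|$, I would track the error $e_k := u_k - x(k\theta_n)$ and, after expanding $x((k+1)\theta_n)=x(k\theta_n)-\int_{k\theta_n}^{(k+1)\theta_n}g(x(s))\,ds$, derive the recursion
\[
\|e_{k+1}\|\le(1+L\theta_n)\|e_k\|+\tfrac12 LP\theta_n^2,
\]
where the factor $(1+L\theta_n)$ comes from the Lipschitz estimate on $g(u_k)-g(x(k\theta_n))$ and the term $\tfrac12 LP\theta_n^2$ is the integrated local truncation error $\int_{k\theta_n}^{(k+1)\theta_n}\|g(x(s))-g(x(k\theta_n))\|\,ds\le LP\theta_n^2/2$.

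Solving this linear recursion with $e_0=0$ gives $\|e_N\|\le\tfrac12 P\theta_n\bigl((1+L\theta_n)^N-1\bigr)\le\tfrac{1}{2N}Ph_n(e^{Lh_n}-1)$, which is exactly \eqref{E3}. The one subtlety I would be careful about is that $g$ is assumed merely Lipschitz and bounded, not $C^1$, so the local error must be bounded through the Lipschitz continuity of $g$ along the (Lipschitz) exact trajectory, as above, rather than through a second derivative of $x(\cdot)$; this is precisely why the displacement estimate $\|x(s)-x(k\theta_n)\|\le P|s-k\theta_n|$ does the work that a bound on $\ddot x$ would otherwise provide.
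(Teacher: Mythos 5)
Your proposal is correct and follows essentially the same route as the paper: telescoping plus the bound $\|g\|\le P$ for \eqref{E1}, one-step Lipschitz stability $(1+L\theta_n)$ with induction and $(1+L\theta_n)^N\le e^{Lh_n}$ for \eqref{E2}, writing the difference quotient as an average of $g$ along the Euler iterates for \eqref{E4}, and the classical local-error-plus-stability argument with a geometric sum for \eqref{E3}, including the same Lipschitz-based local error bound $\tfrac12 LP\theta_n^2$ that avoids any $C^1$ assumption on $g$. The only cosmetic difference is in \eqref{E3}: you subtract the Euler recursion from the integral form of the exact solution to get the error recursion $\|e_{k+1}\|\le(1+L\theta_n)\|e_k\|+\tfrac12 LP\theta_n^2$, while the paper splits via the semigroup identity $\phi(-(k+1)\theta_n,x)=\phi(-\theta_n,\phi(-k\theta_n,x))$ and propagates the error through the Euler stability estimate \eqref{Euler:stability}; both yield the identical bound.
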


\begin{proof}
First note that for every $x\in\R^d$, $t\in\R_+$ and $k\in\N$, we have bounds
\begin{align}
&\|\phi(-t,x)-x\|\le\int_{-t}^0\|g(s)\|ds\le P|t|,\label{traj:far}\\
&\|\phi_E(-k\theta_n,x)-x\|
\le\theta_n\sum_{j=0}^{k-1}\|g(\phi_E(-j\theta_n,x))\|
\le Pk\theta_n,\label{Euler:far}
\end{align}
which is inequality \eqref{E1}.
For all $y,z\in\R^d$, we have 
\begin{equation}
\left.\begin{aligned}
\|\phi_E(-\theta_n,y)-\phi_E(-\theta_n,z)\|
&=\|(y-\theta_ng(y))-(z-\theta_ng(z))\|\\
&\le(1+L\theta_n)\|y-z\|,
\end{aligned}\right\} \label{Euler:stability}
\end{equation}
and a simple induction yields 
\[\|\phi_E(-k\theta_n,y)-\phi_E(-k\theta_n,z)\|
\le(1+L\theta_n)^k\|y-z\|
\le(1+\tfrac{Lh_n}{N})^N\|y-z\|,\]
which gives inequality \eqref{E2}.
Using inequality \eqref{traj:far}, we obtain the bound
\begin{equation}
\left.\begin{aligned}
&\|\phi(-\theta_n,x)\!-\!\phi_E(-\theta_n,x)\|
=\|\big(x\!-\!\!\int_{-\theta_n}^0\!g(\phi(t,x))dt\big)-\big(x-\theta_ng(x)\big)\|\\
&\le\int_{-\theta_n}^0\|g(\phi(t,x))dt-g(x)\|dt
\le L\int_{-\theta_n}^0\|\phi(t,x)-x\|dt
\le\tfrac12 LP\theta_n^2
\end{aligned}\right\}\label{local:error}
\end{equation}
for the local error of Euler's scheme.
Let us prove the estimate 
\begin{align}
\|\phi(-k\theta_n,x)-\phi_E(-k\theta_n,x)\|
\le\tfrac12 LP\theta_n^2\sum_{j=0}^{k-1}(1+L\theta_n)^j. \label{Euler:global}
\end{align}
by induction.
For $k=0$, the statement is trivially satisfied.
Now assume that \eqref{Euler:global} holds for some $k\in\N$.
Using inequalities \eqref{Euler:stability} and \eqref{local:error}
as well as the induction hypothesis, we obtain
\begin{align*}
&\|\phi(-(k+1)\theta_n,x)-\phi_E(-(k+1)\theta_n,x)\|\\
&\le\|\phi(-\theta_n,\phi(-k\theta_n,x))-\phi_E(-\theta_n,\phi(-k\theta_n,x))\|\\
&\qquad+\|\phi_E(-\theta_n,\phi(-k\theta_n,x))-\phi_E(-\theta_n,\phi_E(-k\theta_n,x))\|\\
&\le\tfrac12 LP\theta_n^2+(1+L\theta_n)\tfrac12 LP\theta_n^2\sum_{j=0}^{k-1}(1+L\theta_n)^j
=\tfrac12 LP\theta_n^2\sum_{j=0}^k(1+L\theta_n)^j,
\end{align*}
so the representation \eqref{Euler:global} of the global error of Euler's scheme
is correct.
Now inequality \eqref{E3} follows from
\begin{align*}
&\|\phi(-h_n,x)-\phi_E(-h_n,x)\|
\le\tfrac12 LP\theta_n^2\sum_{j=0}^{N-1}(1+L\theta_n)^j\\
&=\tfrac12 LP\theta_n^2\frac{(1+L\theta_n)^N-1}{L\theta_n}
=\frac{1}{2N}Ph_n((1+\frac{Lh_n}{N})^N-1)
\le\frac{1}{2N}Ph_n(e^{Lh_n}-1).
\end{align*}
Finally, use inequality \eqref{Euler:far} to estimate
\begin{align*}
&\|(k\theta_n)^{-1}(\phi_E(-k\theta_n,x)-x)+g(x)\|
\le k^{-1}\sum_{j=0}^{k-1}\|g(\phi_E(-j\theta_n,x))-g(x)\|\\
&\le k^{-1}\sum_{j=0}^{k-1}L\|\phi_E(-j\theta_n,x)-x\|
\le k^{-1}LP\theta_n\sum_{j=0}^{k-1}j\le\tfrac12 LPk\theta_n,
\end{align*}
which proves inequality \eqref{E4}.
\end{proof}

Now we prove that for any fixed $N\in\N_1$, the overapproximations 
$\varphi_n$ defined in \eqref{cont:time:map}
 yield an overall convergent subdivision algorithm.

\begin{proposition}
The mappings $\varphi_n$ constructed in \eqref{cont:time:map}
satisfy conditions \eqref{not:too:far:1}, \eqref{not:too:far:2}
and \eqref{now:contained}.
\end{proposition}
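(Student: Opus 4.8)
The plan is to verify the three conditions \eqref{not:too:far:1}, \eqref{not:too:far:2} and \eqref{now:contained} separately, the guiding observation being that the radius $r_n$ in \eqref{cont:time:map} is tailored precisely to absorb the combined flow-Lipschitz and Euler-discretization error. For the containment \eqref{now:contained}, I would fix $y\in\phi(-h_n,D_i^n)\cap Q$, write $y=\phi(-h_n,x)$ with $x\in D_i^n$, and pick the subbox $E^{i,n}_l$ containing $x$, so that $\|x-z^{i,n}_l\|\le\varrho_n$. A triangle inequality splits $\|y-\phi_E(-h_n,z^{i,n}_l)\|$ into the flow term $\|\phi(-h_n,x)-\phi(-h_n,z^{i,n}_l)\|$ and the Euler global error $\|\phi(-h_n,z^{i,n}_l)-\phi_E(-h_n,z^{i,n}_l)\|$. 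The Gronwall lemma, available here since $g$ is $L$-Lipschitz, bounds the first by $e^{Lh_n}\varrho_n$, and \eqref{E3} bounds the second by $\tfrac1{2N}Ph_n(e^{Lh_n}-1)$; their sum is exactly $r_n$. Hence $y$ lies in the ball of radius $r_n$ around $\phi_E(-h_n,z^{i,n}_l)$, and since $y\in Q$ lies in some $D_j^n$, that index $j$ belongs to $\varphi_n(i)$ and $y\in\cup_{j\in\varphi_n(i)}D_j^n$.

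For \eqref{not:too:far:1}, given $j\in\varphi_n(i)$ there is a witnessing center $z^{i,n}_l$ and a point of $D_j^n$ within $r_n$ of $\phi_E(-h_n,z^{i,n}_l)$. Using $\diam(D_j^n)\le\rho_n$ to pass to an arbitrary point of $D_j^n$, and \eqref{E1} together with $z^{i,n}_l\in D_i^n$ to control $\dist(\phi_E(-h_n,z^{i,n}_l),D_i^n)\le Ph_n$, I obtain the uniform bound $\dist(D_j^n,D_i^n)\le r_n+\rho_n+Ph_n$, which tends to zero because $h_n,\rho_n\to0$ and $\varrho_n=\rho_n/M\to0$.

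The delicate condition is \eqref{not:too:far:2}, and this is where the main difficulty lies: the factor $h_n^{-1}$ forces all spatial errors to be of smaller order than $h_n$. For $x\in D_j^n$, $z\in D_i^n$ and the witnessing index $l$, I would decompose
\begin{align*}
\|h_n^{-1}(x-z)+g(z)\|
&\le h_n^{-1}\big\|(x-z)-(\phi_E(-h_n,z^{i,n}_l)-z^{i,n}_l)\big\|\\
&\quad+\big\|h_n^{-1}(\phi_E(-h_n,z^{i,n}_l)-z^{i,n}_l)+g(z^{i,n}_l)\big\|
+\|g(z^{i,n}_l)-g(z)\|.
\end{align*}
The middle term is at most $\tfrac12 LPh_n$ by \eqref{E4}, the last at most $L\rho_n$ by the Lipschitz bound in \eqref{setting} and $\diam(D_i^n)\le\rho_n$. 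The first term is bounded by $h_n^{-1}(r_n+2\rho_n)$, using $\|x-\phi_E(-h_n,z^{i,n}_l)\|\le r_n+\rho_n$ and $\|z-z^{i,n}_l\|\le\rho_n$; here $h_n^{-1}r_n=e^{Lh_n}h_n^{-1}\rho_n/M+\tfrac1{2N}P(e^{Lh_n}-1)$ and $2h_n^{-1}\rho_n$ both vanish precisely because of the standing hypothesis $h_n^{-1}\rho_n\to0$. This scaling requirement, rather than any single estimate, is the crux of the argument, and all three bounds are uniform in $i$, in $j\in\varphi_n(i)$ and in the choice of $x,z$.
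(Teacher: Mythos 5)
Your proof is correct, and for conditions \eqref{not:too:far:1} and \eqref{not:too:far:2} it coincides with the paper's argument: you obtain the same uniform bounds $\rho_n+r_n+Ph_n$ and $h_n^{-1}(r_n+2\rho_n)+\tfrac12 LPh_n+L\rho_n$ (the paper writes the latter as $h_n^{-1}(\rho_n+r_n)+\tfrac12 LPh_n+h_n^{-1}\rho_n+L\rho_n$), and you correctly single out $h_n^{-1}\rho_n\to 0$ as the decisive hypothesis. The only genuine divergence is in the proof of \eqref{now:contained}. The paper routes the triangle inequality through $\phi_E(-h_n,x)$, combining the global Euler error \eqref{E3} at $x$ with the Euler stability estimate \eqref{E2}, namely
\[
\dist\big(\phi(-h_n,x),\cup_{l}\{\phi_E(-h_n,z^{i,n}_l)\}\big)
\le\tfrac{1}{2N}Ph_n(e^{Lh_n}-1)+e^{Lh_n}\dist\big(x,\cup_{l}\{z^{i,n}_l\}\big),
\]
whereas you route through $\phi(-h_n,z^{i,n}_l)$, replacing \eqref{E2} by the Gronwall estimate $\|\phi(-h_n,x)-\phi(-h_n,z^{i,n}_l)\|\le e^{Lh_n}\|x-z^{i,n}_l\|$ for the exact flow and applying \eqref{E3} at the center. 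Both decompositions yield exactly $r_n$, because the exact flow and the Euler map happen to share the Lipschitz bound $e^{Lh_n}$ (for Euler this is $(1+Lh_n/N)^N\le e^{Lh_n}$). The paper's version has the small structural advantage of staying entirely within the four estimates of its Euler lemma, never invoking properties of the exact flow beyond \eqref{E3}; your version makes \eqref{E2} superfluous but leans on Gronwall, which is legitimate here since \eqref{setting} is in force (the paper itself cites Gronwall to justify the standing assumptions), though it is an external fact rather than one of the stated Euler estimates.
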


\begin{proof}
We use estimate \eqref{E1} to compute
\begin{align*}
&\lim_{n\to\infty}\sup_{i\in I_n}\sup_{j\in\varphi_n(i)}\dist(D_j^n,D_i^n)\\
&\le\lim_{n\to\infty}\sup_{i\in I_n}\sup_{j\in\varphi_n(i)}
\big\{\dist(D_j^n,\cup_{l=0}^{M^d-1}\{\phi_E(-h_n,z^{i,n}_l)\})\\
&\qquad+\max_{l=0,\ldots,M^d-1}\|\phi_E(-h_n,z^{i,n}_l)-z^{i,n}_l)\|\\
&\le\lim_{n\to\infty}(\rho_n+r_n+Ph_n)=0,
\end{align*}
which is condition \eqref{not:too:far:1}.
Using estimate \eqref{E4}, we obtain for arbitrary $n\in\N$, $i\in I_n$, 
$j\in\varphi_n(i)$, $x\in D^n_j$ and $z\in D^n_i$ a uniform bound
\begin{align*}
&\|h_n^{-1}(x-z)+g(z)\|\\
&\le\inf_{y\in D^n_i} h_n^{-1}\|x-\phi_E(-h_n,y)\|
+\sup_{y\in D^n_i}\|h_n^{-1}(\phi_E(-h_n,y)-y)+g(y)\|\\
&\qquad+\sup_{y\in D^n_i}h_n^{-1}\|y-z\|+\sup_{y\in D^n_i}\|g(z)-g(y)\|\\
&\le h_n^{-1}(\rho_n+r_n)+\tfrac12LPh_n+h_n^{-1}\rho_n+L\rho_n,
\end{align*}
so \eqref{not:too:far:2} holds.
Finally, estimates \eqref{E2} and \eqref{E3} imply for any $x\in D^n_i$ that
\begin{align*}
&\dist(\phi(-h_n,x),\cup_{l=0}^{M^d-1}\{\phi_E(-h_n,z^{i,n}_l)\}\|\\
&\le|\phi(-h_n,x)-\phi_E(-h_n,x)\|
+\dist(\phi_E(-h_n,x),\cup_{l=0}^{M^d-1}\{\phi_E(-h_n,z^{i,n}_l)\})\\
&\le\tfrac{1}{2N}Ph_n(e^{Lh_n}-1)+e^{Lh_n}\dist(x,\cup_{l=0}^{M^d-1}\{z^{i,n}_l\})
\le r_n,
\end{align*}
which proves \eqref{now:contained}.
\end{proof}

Taking $M=1$ and $N=1$ in the above, we obtain a provably convergent 
algorithm that only needs a single Euler step per box.

\section{Conclusion}

We proved that modifications of the subdivision algorithm in the spirit of 
the paper \cite{Ju00} are convergent and verified the usefulness
of this approach. 
It turns out that the overall algorithm is so robust that 
a single evaluation of the dynamical system per box in discrete
time and a single Euler step per box in continuous time suffice to generate 
a convergent numerical method.

\medskip

It would be desirable to know which choices of discretizations and parameters 
yield optimal performance of the implementations we presented.
The basic tradeoff is easy to understand.
Accurate overapproximations of the exact dynamics make the construction of the
mappings $\varphi_n$ very costly, but accelerate the graph theoretical part
of the algorithm and may also lead to a reduction of complexity by eliminating 
larger irrelevant regions at a coarse level. 
As it is, even under idealized conditions, very difficult to prove quantitative results
about approximations of invariant sets, we believe that the question, how these 
two effects can be balanced in an optimal way, cannot be expected 
to be answered rigorously.

\end{document}